\documentclass[a4paper,12pt]{amsart}
\usepackage{amssymb}
\usepackage{cite}
\usepackage{ifthen}
 \usepackage[dvips]{graphicx}
\nonstopmode \numberwithin{equation}{section}
\usepackage{amssymb}
\usepackage{ifthen}
\usepackage{graphicx}
\usepackage{amsmath}
\usepackage[T1]{fontenc} 
\usepackage[utf8]{inputenc}
\usepackage[usenames,dvipsnames]{color}
\usepackage{color}
\usepackage[english]{babel}
\usepackage{fancyhdr}
\usepackage{fancybox}
\usepackage{tikz}

\nonstopmode \numberwithin{equation}{section}
\theoremstyle{plain}
\newtheorem{thm}{Theorem}[section]
\newtheorem{cor}[equation]{Corollary}

\newtheorem{lem}[equation]{Lemma}

\newtheorem{prop}{Proposition}

\newtheorem{conj}{Conjecture}

\theoremstyle{definition}
\newtheorem{defi}{Definition}[section]

\newtheorem{prob}{Problem}[section]
\newtheorem{rem}{Remark}[section]

\theoremstyle{plain}
\newtheorem*{thmA}{Theorem A}
\newtheorem*{thmB}{Theorem B}
\newtheorem*{thmC}{Theorem C}
\newtheorem*{thmD}{Theorem D}

\newcounter{minutes}
\divide\time by 60
\newcounter{hours}
\multiply\time by 60
\addtocounter{minutes}{-\time}

\newcounter {own}
\def\theown {\thesection       .\arabic{own}}

\newenvironment{pf}[1][]{%
 \vskip 3mm
 \noindent
 \ifthenelse{\equal{#1}{}}%
  {{\slshape Proof. }}%
  {{\slshape #1.} }%
 }%
{\qed\bigskip}

\newcounter{alphabet}

\def\be{\begin{equation}}
\def\ee{\end{equation}}

\newcommand{\bee}{\begin{enumerate}}
\newcommand{\eee}{\end{enumerate}}

\newcommand{\blem}{\begin{lem}}
\newcommand{\elem}{\end{lem}}
\newcommand{\bthm}{\begin{thm}}
\newcommand{\ethm}{\end{thm}}
\newcommand{\bcor}{\begin{cor}}
\newcommand{\ecor}{\end{cor}}
\newcommand{\beg}{\begin{examp}}
\newcommand{\eeg}{\end{examp}}
\newcommand{\begs}{\begin{examples}}
\newcommand{\eegs}{\end{examples}}

\newcommand{\bdefn}{\begin{defn}}
\newcommand{\edefn}{\end{defn}}

\newcommand{\bprob}{\begin{prob}}
\newcommand{\eprob}{\end{prob}}
\newcommand{\bei}{\begin{itemize}}
\newcommand{\eei}{\end{itemize}}

\newcommand{\bcon}{\begin{conj}}
\newcommand{\econ}{\end{conj}}
\newcommand{\bcons}{\begin{conjs}}
\newcommand{\econs}{\end{conjs}}
\newcommand{\bprop}{\begin{prop}}
\newcommand{\eprop}{\end{prop}}
\newcommand{\br}{\begin{rem}}
\newcommand{\er}{\end{rem}}
\newcommand{\brs}{\begin{rems}}
\newcommand{\ers}{\end{rems}}
\newcommand{\bo}{\begin{obser}}
\newcommand{\eo}{\end{obser}}
\newcommand{\bos}{\begin{obsers}}
\newcommand{\eos}{\end{obsers}}
\newcommand{\bpf}{\begin{pf}}
\newcommand{\epf}{\end{pf}}
\newcommand{\ba}{\begin{array}}
\newcommand{\ea}{\end{array}}
\newcommand{\beq}{\begin{eqnarray}}
\newcommand{\beqq}{\begin{eqnarray*}}
\newcommand{\eeq}{\end{eqnarray}}
\newcommand{\eeqq}{\end{eqnarray*}}

\begin{document}

\title{Bohr Radius and Landau-type Theorems for Harmonic Mappings with Boundary Functions in Lebesgue Spaces}

\author{Molla Basir Ahamed*}
\address{Molla Basir Ahamed,
	Department of Mathematics, Jadavpur University, Kolkata-700032, West Bengal, India.}
\email{mbahamed.math@jadavpuruniversity.in}

\author{Rajesh Hossain}
\address{Rajesh Hossain, Department of Mathematics, Jadavpur University, Kolkata-700032, West Bengal, India.}
\email{rajesh1998hossain@gmail.com}

\subjclass[{AMS} Subject Classification:]{Primary 30C62; Secondly 31A05,  30H10}
\keywords{Harmonic mappings; Bohr radius; Landau-type theorem; Lebesgue spaces; Poisson kernel; Univalence radius}

\def\thefootnote{}
\footnotetext{ {\tiny File:~\jobname.tex,
printed: \number\year-\number\month-\number\day,
          \thehours.\ifnum\theminutes<10{0}\fi\theminutes }
} \makeatletter\def\thefootnote{\@arabic\c@footnote}\makeatother

\begin{abstract}
This paper investigates the geometric and analytical properties of harmonic mappings $f$ in the unit disk $\mathbb{D}$ induced by boundary functions $F$ belonging to the Lebesgue spaces $L^{p}(\mathbb{T})$ for $1 \le p \le \infty$. We first establish a sharp Bohr-type inequality for the class of bounded harmonic mappings. Specifically, we prove that for a fixed analytic part $|a_{0}|= aM$, the majorant series $M_{f}(r)$ satisfies $M_{f}(r) \le M$ for $r \le (1-a)/(1-a+4/\pi)$, and demonstrate that this radius is best possible. This result is subsequently extended to harmonic mappings with $L^p$ boundary functions, where we determine the sharp Bohr radius $r_{p} = 1/(2C_{q}+1)$, with $C_{q}$ being a constant depending on the conjugate exponent $q$. Furthermore, the paper provides improved Landau-type theorems for these mappings. Under standard normalization, we derive explicit expressions for the radius of univalence $r_{0}$ and the radius of the inscribed schlicht disk $R_{0}$. The sharpness of these constants is discussed through the construction of extremal functions related to the Poisson kernel.
\end{abstract}

\maketitle
\pagestyle{myheadings}
\markboth{ M. B. Ahamed and R. Hossain}{ Bohr Radius for Harmonic Mappings with Boundary Functions in Lebesgue Spaces}

\section{\bf Introduction and Preliminaries}

Given a constant $r > 0$, let $\mathbb{D}_r = \{z : |z| < r\}$. Then $\mathbb{D}_1 = \mathbb{D}$ and $\mathbb{T} = \partial\mathbb{D}$ are the open unit disk and unit circle respectively in the complex plane $\mathbb{C}$. A twice continuously differentiable complex-valued function $f$ is called a harmonic mapping in $\mathbb{D}$ if $f$ is a solution to the partial differential equation $\Delta f = 0$, where 
$$\Delta := \frac{\partial^2}{\partial x^2} + \frac{\partial^2}{\partial y^2}$$
is the Laplace operator and $z = x + iy \in \mathbb{D}$. Recall that every harmonic mapping $f$ defined in $\mathbb{D}$ can be decomposed into $f = h + \overline{g}$, where $h$ and $g$ are analytic functions in $\mathbb{D}$. If we assume that $g(0) = 0$, then the decomposition of $f$ is unique. Hence $f$ has the power series expansion
$$f(z) = h(z) + \overline{g(z)} = \sum_{n=0}^{\infty} a_n z^n + \sum_{n=1}^{\infty} \overline{b_n z^n}, \quad z \in \mathbb{D}.$$
Let
\begin{align*}
	\|Df(z)\| = |f_z(z)| + |f_{\bar{z}}(z)|, \quad z = x + iy \in \mathbb{D},
\end{align*}
where 
\begin{align*}
	f_z = \frac{1}{2} \left( \frac{\partial f}{\partial x} - i \frac{\partial f}{\partial y} \right)\; \mbox{and}\; f_{\bar{z}} = \frac{1}{2} \left( \frac{\partial f}{\partial x} + i \frac{\partial f}{\partial y} \right)
\end{align*} are the complex derivatives of $f$. It is well known that the concept of a harmonic mapping is an extension of an analytic function, a concept that is well documented in the references \cite{Bshouty-Hengartner-AUMCSS-1994,Clunie-Sheil-Small-1984,Duren-2004}.\vspace{1.2mm}

Let $F$ be a boundary function defined on the unit circle $\mathbb{T}$. A solution $f$ to the Dirichlet boundary value problem associated with $F$, 
\begin{equation*}
	\begin{cases} 
		\Delta f(z) = 0, & z \in \mathbb{D}, \\ 
		f(z) = F(z), & z \in \mathbb{T}, 
	\end{cases}
\end{equation*}
constitutes a harmonic mapping. Such a mapping $f$ admits the Poisson integral representation
\begin{equation*}
	f(z) = \mathcal{P}[F](z) = \frac{1}{2\pi} \int_{0}^{2\pi} \mathcal{P}(re^{i(\theta-t)})F(e^{it}) dt,
\end{equation*}
where $\mathcal{P}(z) = \frac{1-|z|^2}{|1-z|^2}$ denotes the Poisson kernel for $z \in \mathbb{D}$.\vspace{1.2mm}

Biharmonic mappings $f \in C^4(\mathbb{D})$, which satisfy the biharmonic equation $\Delta(\Delta f) = 0$, arise naturally in various physical contexts and find extensive application across engineering and biological sciences. In recent years, the geometric and analytical properties of biharmonic mappings—including Landau-type theorems, Bloch constants, Lipschitz continuity, and Schwarz-type lemmas—have been rigorously investigated by several authors (cf. \cite{Li-Ponnusamy-JMAA-2017, Li-Luo-Ponnusamy-2022-CMFT, Li-Li-Luo-Ponnusamy-MJM-2023}).\vspace{1.2mm}

For $p \in [1, \infty]$, let $L^p(\mathbb{T})$ be the set of all measurable functions $F$ from $\mathbb{T}$ into $\mathbb{C}$ such that their $p$-norms, given by
 \begin{align*}
 	\|F\|_p = 
 	\begin{cases} 
 		\left( \int_{0}^{2\pi} |F(e^{it})|^p \, dt \right)^{1/p}, & p \in [1, \infty), \\ 
 		\text{esssup} \{ |f(e^{it})| : t \in [0, 2\pi] \}, & p = \infty. 
 	\end{cases}
 \end{align*}
are finite.\vspace*{1.2mm}

It is well established that if the boundary function $F$ belongs to $L^\infty(\mathbb{T})$, the induced harmonic mapping $f = \mathcal{P}[F]$ remains bounded within the unit disk $\mathbb{D}$. The analytical properties and applications of bounded harmonic mappings have been the subject of extensive study. More recently, significant progress has been made in elucidating the univalent radii, quasiconformality, Lipschitz continuity, and Schwarz-type inequalities for such mappings (cf. \cite{Chen-Gauthier-PAMS-2011, Knezevic-Mateljevic-JMAA-2007, Pavlovic-AASF-2002, Pavlovic-2019}). In this context, we present the following coefficient estimates for the class of bounded harmonic mappings:
\begin{thmA}(\cite{Chen-Ponnusamy-Wang-2011-BMMSS,Liu-Liu-Zhu-AMSC-2011,Xia-Huang-CAMA-2010})
	Suppose that $f=h+\overline{g}$ is a harmonic mapping in $\mathbb{D}$ satisfying $g(0)=0$ and $|f(z)| \le M$ for all $z \in \mathbb{D}$. Then $|a_0| \le M$ and $$|a_n| + |b_n| \le \frac{4M}{\pi}$$for $n=1, 2, \dots$. 
	The above inequality is sharp for the extremal function
	\begin{align}\label{Eq-1.1}
		f_n(z) = \frac{2M\alpha}{\pi} \arg \left( \frac{1+\beta z^n}{1-\beta z^n} \right), \quad |\alpha|=|\beta|=1, \quad n=1, 2, \dots
	\end{align}
\end{thmA}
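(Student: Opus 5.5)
The bound $|a_0|\le M$ is immediate: since $g(0)=0$ we have $f(0)=h(0)=a_0$, and $|f(0)|\le M$.

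For $n\ge 1$ we work on circles and use Fourier coefficients. Fix $r\in(0,1)$ and write
$$f(re^{i\theta})=\sum_{k\ge0}a_kr^ke^{ik\theta}+\sum_{k\ge1}\overline{b_k}r^ke^{-ik\theta}.$$
Orthogonality then gives
$$a_nr^n=\frac1{2\pi}\int_0^{2\pi}f(re^{i\theta})e^{-in\theta}\,d\theta,\qquad \overline{b_n}r^n=\frac1{2\pi}\int_0^{2\pi}f(re^{i\theta})e^{in\theta}\,d\theta.$$

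Choose unimodular constants $\lambda,\mu$ with $\lambda a_n=|a_n|$ and $\mu\overline{b_n}=|b_n|$. Then
$$(|a_n|+|b_n|)r^n=\frac1{2\pi}\int_0^{2\pi}f(re^{i\theta})\bigl(\lambda e^{-in\theta}+\mu e^{in\theta}\bigr)\,d\theta.$$
Taking absolute values and using $|f|\le M$,
$$(|a_n|+|b_n|)r^n\le\frac{M}{2\pi}\int_0^{2\pi}\bigl|\lambda e^{-in\theta}+\mu e^{in\theta}\bigr|\,d\theta.$$
Write $\lambda=e^{i\alpha}$ and $\mu=e^{i\beta}$. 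Then $|\lambda e^{-in\theta}+\mu e^{in\theta}|=2|\cos(n\theta+(\beta-\alpha)/2)|$. Over a full period, $\frac1{2\pi}\int_0^{2\pi}2|\cos(n\theta+c)|\,d\theta=4/\pi$, independently of $n$ and $c$. Hence $(|a_n|+|b_n|)r^n\le 4M/\pi$, and letting $r\to1^-$ gives the inequality.

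There is a subtlety here: taking absolute values of the integral directly does give this. A cruder route, bounding $|a_n|$ and $|b_n|$ separately, yields only $2M$. The point is to combine both coefficients in a single integral before estimating, and I expect this step to be the main obstacle.

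For sharpness, take $f_n$ from \eqref{Eq-1.1}. On $\mathbb{T}$, $\arg\frac{1+\beta z^n}{1-\beta z^n}=\pm\pi/2$, so $|f_n|\le M$ everywhere, since $f_n$ is the Poisson integral of its bounded boundary values. For the coefficients, use $\arg w=\operatorname{Im}\log w$ and
$$\log\frac{1+\zeta}{1-\zeta}=2\sum_{k\ \mathrm{odd}}\frac{\zeta^k}{k}.$$
Then $\operatorname{Im}$ of this series is $\frac1i\sum_{k\ \mathrm{odd}}(\zeta^k-\overline{\zeta}^k)/k$. The $z^n$ coefficients of $h$ and $\overline{g}$ come from $k=1$ with $\zeta=\beta z^n$, and have moduli $\frac{2M}{\pi}$ each. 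Thus $|a_n|+|b_n|=4M/\pi$, and equality holds.
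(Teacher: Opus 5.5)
The paper gives no proof of Theorem A; it is quoted from the cited references, so there is no in-paper argument to compare with. Your proof is correct and is the standard Fourier-coefficient argument for this estimate. It also fits the paper's framework. Pairing $f(re^{i\theta})$ against $\lambda e^{-in\theta}+\mu e^{in\theta}$ and using $\frac{1}{2\pi}\int_0^{2\pi}2|\cos(n\theta+c)|\,d\theta=4/\pi$ is exactly the $p=\infty$, $q=1$ case of the H\"older estimate behind Theorem B, where $2C_1=4/\pi$.

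One step in the sharpness part needs repair. Knowing that $\arg\frac{1+\beta z^n}{1-\beta z^n}=\pm\pi/2$ a.e.\ on $\mathbb{T}$ does not by itself bound $f_n$ in $\mathbb{D}$. The Poisson kernel $\operatorname{Re}\frac{1+\zeta}{1-\zeta}$ has boundary values $0$ a.e.\ and is unbounded, so adding a multiple of it leaves the boundary data unchanged. Saying that $f_n$ is the Poisson integral of its boundary values therefore presupposes the boundedness you want to prove.

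Argue directly instead. For $|\zeta|<1$ we have $\operatorname{Re}\frac{1+\zeta}{1-\zeta}=\frac{1-|\zeta|^2}{|1-\zeta|^2}>0$. The principal argument is $\operatorname{Im}$ of the analytic function $\log\frac{1+\zeta}{1-\zeta}=2\sum_{k\ \mathrm{odd}}\zeta^k/k$, so it is harmonic and takes values in $(-\pi/2,\pi/2)$. Hence $|f_n|<M$. With this, your computation $|a_n|=|b_n|=2M/\pi$ completes the proof of sharpness.
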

\begin{thmB}(\cite{Shi-Li-Lian-2026-CMFT})
	Suppose that $F \in L^p(T)$ for $1 \le p \le \infty$ and $f = \mathcal{P}[F]$ is a harmonic mapping in $\mathbb{D}$. If $f(z) = \sum_{n=0}^{\infty} a_n z^n + \sum_{n=1}^{\infty} \overline{b_n z^n}$, for any positive integer $n$, then
	\begin{enumerate}
		\item[(a)] $|a_n|, |b_n| \le \|F\|_p$. The equalities hold for the functions $F(e^{it}) = e^{int}$ and $F(e^{it}) = e^{-int}$ respectively.\vspace{1.2mm}
		
		\item[(b)] $|a_n| + |b_n| \le 2C_q\|F\|_p$, where 
		\begin{align*}
			C_q = \left(\frac{1}{2\pi} \int_0^{2\pi} | \cos(nt) |^q dt\right)^{1/q} \le 1\; \mbox{and}\;\frac{1}{p} + \frac{1}{q} = 1.
		\end{align*}
	\end{enumerate}
	Specifically, when $p = \infty$, $C_q = 2/\pi$, and $|a_n| + |b_n| \le 4\|F\|_{\infty}/\pi$. This equality is valid for the function $f_n(z)$ given by the extremal function in \eqref{Eq-1.1} with $|f| \le \|F\|_{\infty} = M$ for all $z \in \mathbb{D}$. When $p = 1$ or $q = \infty$, $C_q = 1$, which results in $|a_n| + |b_n| \le 2\|F\|_1$.
\end{thmB}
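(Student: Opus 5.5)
The coefficients of $f=\mathcal{P}[F]$ are Fourier coefficients of $F$, so we work with those directly. Expanding the Poisson kernel as $\mathcal{P}(re^{i(\theta-t)})=\sum_{k\in\mathbb{Z}} r^{|k|}e^{ik(\theta-t)}$ and integrating term by term (justified by uniform convergence in $t$ for fixed $r<1$, together with $F\in L^1(\mathbb{T})$) gives, for $n\ge1$,
\begin{align*}
a_n=\frac{1}{2\pi}\int_0^{2\pi}F(e^{it})e^{-int}\,dt,\qquad \overline{b_n}=\frac{1}{2\pi}\int_0^{2\pi}F(e^{it})e^{int}\,dt .
\end{align*}
We use the normalization implicit in Theorem B, namely the normalized measure $dt/2\pi$ in $\|F\|_p$. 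This is the normalization under which equality holds for $F=e^{int}$ and under which $C_q=2/\pi$ when $q=1$.

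For part (a), Hölder's inequality gives $|a_n|\le \|F\|_1\le\|F\|_p$, since $|e^{-int}|=1$ and the normalized measure has total mass one. If $F=e^{int}$, then $a_n=1=\|F\|_p$. The bound for $b_n$ and its equality case $F=e^{-int}$ follow in the same way.

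For part (b), choose unimodular constants $\lambda,\mu$ with $|a_n|=\lambda a_n$ and $|b_n|=\mu\overline{b_n}$. Then
\begin{align*}
|a_n|+|b_n|=\frac{1}{2\pi}\int_0^{2\pi}F(e^{it})\bigl(\lambda e^{-int}+\mu e^{int}\bigr)\,dt .
\end{align*}
Write $\lambda=e^{i(\alpha-\beta)}$ and $\mu=e^{i(\alpha+\beta)}$, which is possible by solving for $\alpha,\beta$. Then $\lambda e^{-int}+\mu e^{int}=2e^{i\alpha}\cos(nt+\beta)$. Hölder's inequality now yields
\begin{align*}
|a_n|+|b_n|\le 2\|F\|_p\Bigl(\frac{1}{2\pi}\int_0^{2\pi}|\cos(nt+\beta)|^q\,dt\Bigr)^{1/q}=2C_q\|F\|_p ,
\end{align*}
where the translation invariance of $|\cos(n\cdot)|^q$ over a full period removes $\beta$. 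The substitution $s=nt$ shows that $C_q$ does not depend on $n$. Moreover $C_q\le1$ because $|\cos|\le1$.

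The special cases follow by direct computation. For $q=1$ we get $\frac{1}{2\pi}\int_0^{2\pi}|\cos t|\,dt=2/\pi$, so $|a_n|+|b_n|\le 4\|F\|_\infty/\pi$. For $q=\infty$ we get $C_q=1$, so $|a_n|+|b_n|\le 2\|F\|_1$.

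For equality when $p=\infty$, take the Hölder extremal $F=M\alpha\,\mathrm{sgn}\cos(nt)$, suitably rotated. Its Poisson extension equals $\frac{2M\alpha}{\pi}\arg\frac{1+\beta z^n}{1-\beta z^n}$, the function $f_n$ of \eqref{Eq-1.1}. To see this, recall that $\frac{2}{\pi}\arg\frac{1+z}{1-z}$ is the harmonic measure combination equal to $\pm1$ on the two boundary semicircles; replacing $z$ by $z^n$ turns this into the sign pattern of $\cos(nt)$.

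The main obstacle is this last identification. We must check that the boundary values of $\arg\frac{1+\beta z^n}{1-\beta z^n}$ are $\pm\pi/2$ on the correct arcs, for example through the expansion $\arg\frac{1+z}{1-z}=2\,\mathrm{Im}\sum_{k\ \mathrm{odd}} z^k/k$. We must also confirm that this extremal function has $|a_n|+|b_n|=4M/\pi$ exactly.
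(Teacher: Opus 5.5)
The paper gives no proof of Theorem B. It is imported from Shi--Li--Lian and used as a black box in Theorems 2.2 and 3.1, so there is no in-paper argument to compare yours against. Your proof is correct and is the standard argument for these estimates: you identify $a_n=\hat F(n)$ and $\overline{b_n}=\hat F(-n)$, then apply H\"older against $\lambda e^{-int}+\mu e^{int}=2e^{i\alpha}\cos(nt+\beta)$.

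Your use of the normalized measure $dt/2\pi$ is more than a convention. Take the norm exactly as the paper defines it, $\|F\|_p=(\int_0^{2\pi}|F|^p\,dt)^{1/p}$. Then the same computation gives $|a_n|\le(2\pi)^{-1/p}\|F\|_p$ and $|a_n|+|b_n|\le 2(2\pi)^{-1/p}C_q\|F\|_p$. The inequalities of Theorem B still hold, but the equality claims in (a) fail for $p<\infty$, since $\|e^{int}\|_p=(2\pi)^{1/p}>1=|a_n|$. The two normalizations agree only at $p=\infty$.

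The step you call the main obstacle closes quickly. The identity $\frac{1+e^{i\phi}}{1-e^{i\phi}}=i\cot(\phi/2)$ shows that, for $\beta=i$, the bounded harmonic function $f_n$ has radial limits $M\alpha\,\mathrm{sgn}\cos(nt)$ a.e. A bounded harmonic function is the Poisson integral of its radial limits, so $f_n=\mathcal{P}[F]$ for this $F$; other $\beta$ amount to rotations of the variable. For this $F$ you can compute directly
$a_n=\overline{b_n}=\frac{M\alpha}{2\pi}\int_0^{2\pi}|\cos nt|\,dt=\frac{2M\alpha}{\pi}$, because the terms $\int_0^{2\pi}\mathrm{sgn}(\cos nt)\sin nt\,dt$ vanish. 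This gives $|a_n|+|b_n|=4M/\pi$ without the odd-power expansion.
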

By leveraging coefficient estimates for bounded harmonic mappings, several authors have extensively investigated Landau-type theorems and Bloch constants across various classes of mappings, including planar harmonic, biharmonic, and pluriharmonic mappings (cf. \cite{Chen-Ponnusamy-Wang-2011-CAOT,Chen-Zhu-2019-BSM,Liu-Chen-2018-JMAA,Li-Luo-Ponnusamy-2022-CMFT,Shi-Li-Lian-2026-CMFT}).\vspace{1.2mm}

 The concept of the Bohr radius originates from a classical power series theorem established by Harald Bohr in $1914$. While originally formulated for analytic functions mapping the unit disk into itself, the theory has since evolved into a significant area of geometric function theory. In the context of harmonic mappings $f = h + \overline{g}$, the Bohr phenomenon is investigated by analyzing the majorant series, which replaces the coefficients of the power series expansion with their absolute values. We refer the readers for more recent developments of Bohr phenomenon from \cite{Ahamed-Ahammed-MJM-2024,Ahamed-Allu-Halder-AFM-2022,Ahamed-Allu-Halder-AMP-2021,Kayumov-Ponnusamy-Shakirov-MN-2018,Kumar-Ponnusamy-AMP-2026,Liu-Ponnusamy-PAMS-2019} and references therein.\vspace{1.2mm} 
 
 Given the depth of these existing results, it is natural to formulate the following problem:
\begin{prob}\label{P-1}
	How does the $L^p$ nature of the boundary function influence the Bohr radius of the harmonic mapping $f$? Can we determine a Bohr radius $r_p$ that depends explicitly on the conjugate exponent $q$ of $p$?
\end{prob}
The classical Landau theorem (see \cite{Landau-1926}) asserts that if $f$ is a holomorphic mapping with $f(0)=0=f'(0)-1$ and $|f(z)|<M$ for $z\in\mathbb{D}$, then $f$ is univalent in $\mathbb{D}_{r_0}$, and $f(\mathbb{D}_{r_0})$ contains a disc $\mathbb{D}_{\sigma_0}$, where
\begin{align*}
	r_0=\frac{1}{M+\sqrt{M^2-1}}\;\;\mbox{and}\; \sigma_0=Mr_0^2.
\end{align*}
The quantities $r_0$ and $\sigma_0$ cannot be improved, the extremal function is 
\begin{align*}
	f_0(z)=Mz\left(\frac{1-Mz}{M-z}\right).
\end{align*}
\begin{prob}\label{P-2}
	Given a harmonic mapping $f = \mathcal{P}[F]$ with $\|F\|_{L^p} \le 1$, what are the quantitative Landau-type constants for this class? Specifically, can we determine the radius of univalence $r_0$ and the radius of the inscribed schlicht disk $R_0$ under standard normalization?
\end{prob}
Against this theoretical background, the present paper investigates and establishes several new results for harmonic mappings with $L^p$ boundary data. In Section \ref{Sec-2}, we first establish a result for Bohr radius for bounded harmonic mappings and then we give a positive answer to the Problem \ref{P-1} and prove Theorem \ref{Th-2.1}, which establishes the Bohr inequality for this class and identifies the Bohr radius as $r_p = 1/(2C_q + 1)$, where $C_q$ depends on the conjugate exponent. Subsequently, in Section \ref{Sec-3}, we derive Theorem \ref{Th-3.1} answering Problem \ref{P-2}, which provides an improved version of Landau-type theorems by leveraging the properties of Gauss hypergeometric functions. Detailed proofs for each of these results are provided within their respective sections.
\section{\bf Bohr radius for harmonic mappings}\label{Sec-2}
 This section extends these classical notions to specific classes of bounded harmonic mappings and those induced by boundary functions in Lebesgue spaces.
\subsection{Bohr radius for bounded harmonic mappings}
To provide a rigorous framework for our results, we define the Bohr radius for a general family of harmonic mappings as follows.
\begin{defi}
	Let $\mathcal{F}$ be a class of harmonic mappings $f(z) = \sum_{n=0}^{\infty} a_n z^n + \sum_{n=1}^{\infty} \overline{b_n z^n}$ defined in the unit disk $\mathbb{D}$. Suppose that for every $f \in \mathcal{F}$, the mapping is bounded by a constant $M$ (i.e., $|f(z)| \le M$ for all $z \in \mathbb{D}$). The Bohr radius $r_{\mathcal{F}}$ for the class $\mathcal{F}$ is defined as the largest radius $r \in [0, 1)$ such that the majorant series $M_f(r)$ satisfies the inequality 
	\begin{align*}
		M_f(r) = |a_0| + \sum_{n=1}^{\infty} (|a_n| + |b_n|)r^n \le M
	\end{align*}for all functions $f$ belonging to the class $\mathcal{F}$. The radius is said to be best possible (or sharp) if, for any $r > r_{\mathcal{F}}$, there exists at least one function $f \in \mathcal{F}$ such that $M_f(r) > M$.
\end{defi}
We obtain the following result finding the sharp Bohr radius for harmonic mappings of the form $f(z) = \sum_{n=0}^{\infty} a_n z^n + \sum_{n=1}^{\infty} \overline{b_n z^n}$.
\begin{thm}\label{Th-2.1}
	Let $f(z) = \sum_{n=0}^{\infty} a_n z^n + \sum_{n=1}^{\infty} \overline{b_n z^n}$ be a harmonic mapping in the unit disk $\mathbb{D}$ such that $g(0)=0$ and $|f(z)| \le M$ for all $z \in \mathbb{D}$. For a fixed $|a_0| = aM$ where $a \in [0, 1]$, the majorant series 
	\begin{align*}
		M_f(r) = |a_0| + \sum_{n=1}^{\infty} (|a_n| + |b_n|)r^n
	\end{align*} satisfies the Bohr inequality $M_f(r) \le M$ for $r \le r_H(a)$, where
	\begin{align*}
		r_H(a) = \frac{1-a}{1-a + 4/\pi}.
	\end{align*}
	The radius $r_H(a)$ is best possible.
\end{thm}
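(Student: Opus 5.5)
The inequality part follows directly from the coefficient estimate in Theorem A. Since $|f(z)|\le M$ in $\mathbb{D}$ and $g(0)=0$, Theorem A gives $|a_n|+|b_n|\le 4M/\pi$ for every $n\ge 1$. Inserting this into the majorant series and using $|a_0|=aM$, we get for $0\le r<1$
\begin{align*}
M_f(r)\le aM+\frac{4M}{\pi}\sum_{n=1}^{\infty} r^n = aM+\frac{4M}{\pi}\,\frac{r}{1-r}.
\end{align*}
The right-hand side is at most $M$ exactly when $\frac{4}{\pi}\frac{r}{1-r}\le 1-a$. This is a linear inequality in $r$ after clearing the denominator: $\frac{4}{\pi}r\le (1-a)(1-r)$, that is, $r\le \frac{1-a}{1-a+4/\pi}=r_H(a)$. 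This proves $M_f(r)\le M$ for $r\le r_H(a)$. The case $a=1$ is consistent: then $f$ is the constant $a_0$ by the maximum principle, and $r_H(1)=0$.

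For sharpness, I would fix $r>r_H(a)$ and look for admissible $f$ with $|a_0|=aM$ whose coefficient sums $|a_n|+|b_n|$ are as close as possible to $4M/\pi$ for all $n\le N$, with $N$ large. Then the partial majorant $aM+\frac{4M}{\pi}\sum_{n=1}^{N}r^n$ already exceeds $M$ by the computation above read in reverse. The natural building blocks are the extremal functions $f_n$ of \eqref{Eq-1.1}. I would try to shift them by the constant $aM$ and rescale to keep $|f|\le M$, for example $aM+(1-a)f_n$, or use a convex combination of suitably rotated $f_n$'s.

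I expect this step to be the main obstacle, and I am genuinely unsure it can be completed. Each $f_n$ attains $4M/\pi$ only in the $n$-th coefficient. Indeed, $\arg\frac{1+z}{1-z}=\operatorname{Im}\sum_{k\ \mathrm{odd}}\frac{2z^k}{k}$, so the coefficients of $f_1$ decay like $4M/(\pi k)$. Moreover, rescaling by $(1-a)$ to accommodate $a_0$ shrinks the coefficients by the factor $(1-a)$. That would only show sharpness of the smaller radius coming from $aM+(1-a)\frac{4M}{\pi}\frac{r}{1-r}$, not of $r_H(a)$.

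So the first thing I would try is a limiting argument. The aim is a sequence of bounded harmonic maps, for instance Poisson integrals of unimodular step functions multiplied by $M$, whose mean is $aM$ and whose first several Fourier coefficients are all close to $4M/\pi$. If such a sequence cannot be produced, the careful check at this step will show whether the stated radius is truly best possible, or whether the sharp radius for fixed $a$ must instead involve a refined coefficient bound depending on $a$, of the type $|a_n|+|b_n|\le \frac{4M}{\pi}(1-a^2)$-like estimates.
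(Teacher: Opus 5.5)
Your proof of $M_f(r)\le M$ for $r\le r_H(a)$ is correct and is exactly the paper's argument: Theorem A, then summing the geometric series. The gap is the sharpness half, which you leave open. Your doubt is justified: that step cannot be completed, because the sharpness assertion is false.

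The limiting sequence you are looking for does not exist. By Parseval, for $0<\rho<1$,
\[
|a_0|^2+\sum_{n\ge1}\bigl(|a_n|^2+|b_n|^2\bigr)\rho^{2n}=\frac{1}{2\pi}\int_0^{2\pi}|f(\rho e^{it})|^2\,dt\le M^2 .
\]
Hence $\sum_{n\ge1}(|a_n|+|b_n|)^2\le 2\sum_{n\ge1}(|a_n|^2+|b_n|^2)\le 2M^2(1-a^2)$. Since $2(4/\pi)^2>2$, no two of the sums $|a_n|+|b_n|$ can be close to $4M/\pi$ at the same time. Cauchy--Schwarz then gives
\[
M_f(r)\le aM+\sqrt{2(1-a^2)}\,M\,\frac{r}{\sqrt{1-r^2}},
\]
and the right-hand side is $\le M$ whenever $r\le\sqrt{(1-a)/(3+a)}$. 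Put $x=1-a\in(0,1]$ and $c=4/\pi$. Then $\sqrt{(1-a)/(3+a)}>r_H(a)$ reduces to $2x^2+(2c-4)x+c^2>0$, which holds because the discriminant $-4c^2-16c+16$ is negative. For instance, at $a=0$ the Bohr inequality holds up to $1/\sqrt3\approx0.577$, well beyond $\pi/(\pi+4)\approx0.440$. At $a=1$ the map is constant, so $M_f(r)=M$ for every $r$. Thus $r_H(a)$ is a valid radius for the class with $|a_0|=aM$, but it is never best possible.

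The paper's own sharpness argument fails for the reason you spotted. It evaluates the majorant at $|a_n|+|b_n|=4M/\pi$ for all $n$ at once, in effect replacing $\sup_f M_f(r)$ by $aM+\sum_n r^n\sup_f(|a_n|+|b_n|)$. But each extremal $f_n$ of Theorem A saturates only its $n$-th coefficient, and no function in the class saturates two. A refined termwise bound depending on $a$ would not rescue the claim either. The obstruction is the joint $\ell^2$ constraint on the coefficients, not the size of any single one.

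So the missing idea in your proposal is the Parseval estimate above. It settles the question rather than leaving it open: only the one-sided statement ($M_f(r)\le M$ for $r\le r_H(a)$) is true, and the optimality of $r_H(a)$ is not.
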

\begin{rem}
	As $a \to 1$ (where the constant term takes the maximum allowed value $M$), the radius $r_H(a)$ approaches $0$, which is consistent with the fact that no other terms can exist in the series if $|a_0|=M$. For the classical case where $a=0$ (the function vanishes at the origin), the radius is $r_H = \frac{1}{1+4/\pi} = \frac{\pi}{\pi+4} \approx 0.4397$.
\end{rem}
\begin{proof}[\bf Proof of Theorem \ref{Th-2.1}]
	We will use the coefficient estimates in the majorant series. From the statement provided, we have the sharp estimates for the coefficients of a bounded harmonic mapping $|a_0| \le M$ and 
	\begin{align*}
		|a_n| + |b_n| \le \frac{4M}{\pi}\; \mbox{for}\;n = 1, 2, \dots
	\end{align*}\vspace{1.2mm}
	
	We substitute these estimates into the definition of the majorant series $M_f(r)$,
	\begin{align*}
		M_f(r) = |a_0| + \sum_{n=1}^{\infty} (|a_n| + |b_n|)r^n \le |a_0| + \frac{4M}{\pi} \sum_{n=1}^{\infty} r^n.
	\end{align*}For $|r| < 1$, the geometric series $\sum_{n=1}^{\infty} r^n$ converges to $\frac{r}{1-r}$. Thus, we see that 
	\begin{align*}
		M_f(r) \le |a_0| + \frac{4M}{\pi} \left(\frac{r}{1-r}\right).
	\end{align*}
	To satisfy the condition $M_f(r) \le M$, we set $|a_0| = aM$ and solve for $r$, 
	\begin{align*}
		aM + \frac{4M}{\pi}\left(\frac{r}{1-r}\right) \le M
	\end{align*} which implies that 
	\begin{align*}
		r \le r_H(a):=\frac{1-a}{1-a + 4/\pi}.
	\end{align*}
	Thus the desired inequality is established.\vspace{1.2mm}
	
	To show that the Bohr radius $r_H(a)$ is the best possible, we must demonstrate that for any $r > r_H(a)$, there exists a function in the class such that the majorant series $M_f(r) > M$. \vspace{1.2mm}
	
	Consider the function $f_n(z)$ defined in \eqref{Eq-1.1}, which is a rotation of the harmonic Koebe-type mapping. For a fixed $n$ and $|a_0| = aM$, we consider a mapping where the coefficients $a_n$ and $b_n$ saturate the bound $4M/\pi$. Specifically, let 
	\begin{align*}
		f(z) = aM + \frac{4M}{\pi} z^n + \dots
	\end{align*}
	\begin{rem}
		While $f_n(z)$ in \eqref{Eq-1.1} expands into a series of all odd powers $z^{nk}$, establishing the optimality of the radius requires only an analysis of the $n$-th coefficient's contribution in the limit of large $n$.
	\end{rem}
	
	The majorant series is defined as 
	\begin{align*}
		M_f(r) = |a_0| + \sum_{k=1}^{\infty} (|a_k| + |b_k|)r^k.
	\end{align*} Using the sharp estimates
	\begin{enumerate}
		\item[(i)] $|a_0| = aM$.\vspace{2mm}
		\item[(ii)] $|a_n| + |b_n| = \dfrac{4M}{\pi}$.\vspace{2mm}
		\item[(iii)] For all other $k \neq n$, we can technically set $|a_k| = |b_k| = 0$ to test if the radius can be larger.
	\end{enumerate} 
	However, the definition of the Bohr radius $r_H$ for a class requires that the inequality $M_f(r) \leq M$ holds for all functions in that class. To show $r_H(a)$ is sharp, we examine the behavior as $n \to \infty$. Suppose we choose a radius $r > r_H(a)$. By the definition of $r_H(a)$, we see that 
	\begin{align*}
		r > \frac{1-a}{1-a + 4/\pi} \implies \frac{4}{\pi}\left(\frac{r}{1-r}\right) > 1-a.
	\end{align*}
	
	Now, consider the sum of the majorant series. From the sharp coefficient estimates in Theorem A, we know there exist functions where $|a_n| + |b_n|$ is arbitrarily close to $\frac{4M}{\pi}$. If we consider the `worst-case' tail of the majorant series where the bounds are reached
	\begin{align*}
		M_f(r) = aM + \sum_{n=1}^{\infty} \left( \frac{4M}{\pi} \right) r^n = aM + \frac{4M}{\pi} \left( \frac{r}{1-r} \right).
	\end{align*}For the inequality $M_f(r) \leq M$ to hold, we require \begin{align*}
	aM + \frac{4M}{\pi} \left( \frac{r}{1-r} \right) \leq M
	\end{align*} which shows that 
	\begin{align*}
		\frac{4}{\pi}\left(\frac{r}{1-r}\right) \leq 1-a.
	\end{align*} However, as shown in previous step, if $r > r_H(a)$, then, we have 
	\begin{align*}
		\frac{4}{\pi}\left(\frac{r}{1-r}\right) > 1-a.
	\end{align*}Substituting this back into the majorant series, it is easy to see that 
	\begin{align*}
		M_f(r) = aM + M \left( \frac{4}{\pi} \cdot \frac{r}{1-r} \right) > aM + M(1-a) = M.
	\end{align*}
	Since $M_f(r) > M$ for any $r > r_H(a)$, the value$$r_H(a) = \frac{1-a}{1-a + 4/\pi}$$is the largest possible radius. Any further increase in $r$ would result in the majorant series exceeding the boundary $M$ for the extremal functions.\vphantom{1.2mm}
	
	 Specifically, for the classical case where $f(0)=0$ (so $a=0$), \begin{align*}
	 	r_H(0) = \frac{1}{1+4/\pi} = \frac{\pi}{\pi+4}.
	 \end{align*}If $r = \frac{\pi}{\pi+4} + \epsilon$, then we see that 
	 \begin{align*}
	 	M_f(r) = \frac{4}{\pi}\left(\frac{r}{1-r}\right) > 1
	 \end{align*} which violates the Bohr condition $M_f(r) \leq 1$ (for $M=1$). This completes the proof.
\end{proof}
\subsection{Bohr radius in Lebesgue spaces}
Using the coefficient estimates provided in \cite[Theorem 2.1]{Shi-Li-Lian-2026-CMFT}, we state and prove the following result for the class of harmonic mappings with boundary functions in $L^p(\mathbb{T})$.
\begin{thm}\label{Th-2.2}
	Let $F \in L^p(\mathbb{T})$ for $1 \le p \le \infty$ and $f(z) = \mathcal{P}[F](z) = \sum_{n=0}^{\infty} a_n z^n + \sum_{n=1}^{\infty} \overline{b_n z^n}$ be a harmonic mapping in the unit disk $\mathbb{D}$. For the majorant series 
	\begin{align*}
		M_f(r) = |a_0| + \sum_{n=1}^{\infty} (|a_n| + |b_n|)r^n,
	\end{align*} the inequality $M_f(r) \le \|F\|_p$ holds for $r \le r_p$, where the Bohr radius $r_p$ is given by 
	\begin{align*}
		r_p = \frac{1}{2C_q + 1},
	\end{align*}
	where 
	\begin{align}\label{Eq-2.1}
		C_q = \left(\frac{1}{2\pi} \int_0^{2\pi} |\cos(nt)|^q dt\right)^{1/q}
	\end{align} and $q$ is the dual of $p$ (i.e., $1/p + 1/q = 1$).
\end{thm}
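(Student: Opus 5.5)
The plan is to follow the template of the proof of Theorem \ref{Th-2.1}, with Theorem B(b) in place of Theorem A. The extra difficulty is that the constant term $|a_0|$ is now counted on the left while the right-hand side is only $\|F\|_p$.

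\textbf{Step 1: the tail estimate.} For every $n\ge 1$, Theorem B(b) gives $|a_n|+|b_n|\le 2C_q\|F\|_p$. Summing the geometric series yields
\begin{align*}
\sum_{n=1}^{\infty}(|a_n|+|b_n|)r^n\le 2C_q\|F\|_p\,\frac{r}{1-r}.
\end{align*}
Since $r/(1-r)$ is increasing, for $r\le r_p=1/(2C_q+1)$ we have $r/(1-r)\le 1/(2C_q)$. So the whole tail is at most $\|F\|_p$. The constant $C_q$ in \eqref{Eq-2.1} is the one in Theorem B; for $p=\infty$ it equals $2/\pi$ and for $p=1$ it equals $1$.

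\textbf{Step 2: absorbing $|a_0|$ (the main obstacle).} Step 1 by itself only gives $M_f(r)\le |a_0|+\|F\|_p$. The crude bound $|a_0|\le\|F\|_p$ from the Poisson representation gives $2\|F\|_p$, which is not enough. As stated, with $|a_0|$ included and no $|a_0|$-dependence in $r_p$, the bound $M_f(r_p)\le\|F\|_p$ does not follow by simple summation: the tail alone is allowed to reach $\|F\|_p$ at $r=r_p$. What is needed is a tail estimate that shrinks as $|a_0|$ grows.

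I would try first to replace Theorem B(b) by a refined estimate of the form
\begin{align*}
|a_n|+|b_n|\le 2C_q\bigl(\|F\|_p-|a_0|\bigr),\qquad n\ge 1.
\end{align*}
The idea is to apply the duality argument behind Theorem B to $F-a_0$, since $a_n$ and $b_n$ are unchanged for $n\ge1$. One would then need $\|F-a_0\|_p\le\|F\|_p-|a_0|$ in the relevant normalisation. This is the delicate point, and I expect it to fail in general. For $p=2$, orthogonality gives only $\|F-a_0\|^2=\|F\|^2-|a_0|^2$, and $\sqrt{\|F\|^2-|a_0|^2}$ is larger than $\|F\|-|a_0|$ when $a_0\neq0$. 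So the refined inequality can hold at best for special $F$, not for the whole class.

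If Step 2 works, summation gives, for $r\le r_p$,
\begin{align*}
M_f(r)\le |a_0|+\bigl(\|F\|_p-|a_0|\bigr)\cdot 2C_q\frac{r}{1-r}\le\|F\|_p.
\end{align*}
If it fails, the honest conclusion is that the statement holds as worded under the normalization $a_0=0$. For general $|a_0|=a\|F\|_p$ the radius must then be replaced by the analogue of $r_H(a)$, namely $(1-a)/(1-a+2C_q)$.

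\textbf{Step 3: sharpness.} I would proceed as in Theorem \ref{Th-2.1}, testing the extremal functions of Theorem B. For $p=\infty$ this is \eqref{Eq-1.1}; for general $p$ the candidates are $F$ built from $e^{\pm int}$ and $|\cos nt|^{q-1}$. The task is to show that the tail can approach $2C_q\|F\|_p\,r/(1-r)$, which exceeds $\|F\|_p$ once $r>r_p$. Checking that these extremals saturate the bound simultaneously in many $n$, rather than one $n$ at a time, is a second point I would need to verify.
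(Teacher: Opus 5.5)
Your Step 1 is exactly the paper's argument, and your diagnosis in Step 2 is correct. The paper does not overcome it either: after reaching $|a_0|+2C_q\|F\|_p\,r/(1-r)$, its proof simply sets $|a_0|=0$. So what it actually establishes is your ``honest conclusion'', namely the case $a_0=0$, or your radius $(1-a)/(1-a+2C_q)$ when $|a_0|=a\|F\|_p$.

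The gap in your proposal is that Step 2 is left as an open alternative when it must be settled negatively. No refined coefficient estimate can rescue the statement, because it is false once $a_0\neq0$ is allowed. For $0<\delta<1$ put $\epsilon=\sqrt{2\delta-\delta^2}$ and $F(e^{it})=1-\delta+i\epsilon\sin t$. Then $|F|^2=(1-\delta)^2+\epsilon^2\sin^2t\le1$ with equality at $t=\pi/2$, so $\|F\|_\infty=1$. On the other hand $f=\mathcal{P}[F]=1-\delta+\frac{\epsilon}{2}z-\frac{\epsilon}{2}\overline{z}$, so $M_f(r)=1-\delta+\epsilon r>1$ for every $r>\sqrt{\delta/(2-\delta)}$. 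For $\delta<1/5$ this threshold is below $1/3\le r_p$.

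The paper's norm definition omits the factor $1/(2\pi)$, but Theorem B must use the normalized norm, since its equality case $F=e^{int}$ forces it. With that norm $\|F\|_p\le\|F\|_\infty$, so the same $F$ refutes the claim for every $p$. For $p=1$ even $F=1+\cos t$ works, with $\|F\|_1=1$ and $M_f(r)=1+r$. You should therefore commit to the hypothesis $a_0=0$ rather than present it as a fallback.

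Your worry in Step 3 is also decisive, and there the paper's argument is invalid. It identifies $\sup_f M_f(r)$ with $|a_0|+2C_q\|F\|_p\,r/(1-r)$ by saturating each coefficient with a different function. In fact $r_p$ is not the Bohr radius even when $a_0=0$. For $2\le p\le\infty$, Parseval gives $\sum_{n\ge1}(|a_n|^2+|b_n|^2)\le\|F\|_2^2\le\|F\|_p^2$, and Cauchy--Schwarz then yields
\[
\sum_{n=1}^{\infty}(|a_n|+|b_n|)r^n\le\sqrt{2}\,\|F\|_p\,\frac{r}{\sqrt{1-r^2}}\le\|F\|_p\quad\text{for } r\le\frac{1}{\sqrt{3}}.
\]
Meanwhile $r_p\le\pi/(\pi+4)<1/\sqrt{3}$, because $C_q\ge C_1=2/\pi$. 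So the defensible content is your Steps 1--2 under $a_0=0$, as a sufficient radius only. Step 3 should be abandoned, not proved.
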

\begin{proof}[\bf Proof of Theorem \ref{Th-2.2}]
	From \cite[Theorem 2.1]{Shi-Li-Lian-2026-CMFT}, any harmonic mapping 
	\begin{align*}
		f(z) = \sum_{n=0}^{\infty} a_n z^n + \sum_{n=1}^{\infty} \overline{b_n z^n}
	\end{align*} in the unit disk $\mathbb{D}$ induced by a boundary function $F \in L^p(\mathbb{T})$ satisfies 
	\begin{align*}
		|a_0| \le \|F\|_p,\; |a_n| + |b_n| \le 2C_q \|F\|_p\; \mbox{for}\;n \ge 1,
	\end{align*} where $C_q $ is defined as in \eqref{Eq-2.1}.
	
	The majorant series is defined as the sum of the absolute values of the coefficients 
	\begin{align*}
		M_f(r) = |a_0| + \sum_{n=1}^{\infty} (|a_n| + |b_n|)r^n.
	\end{align*}Substitute the coefficient estimates from Step 1 into this series
	\begin{align*}
		M_f(r) \le |a_0| + \sum_{n=1}^{\infty} (2C_q \|F\|_p) r^n.
	\end{align*}Factor out the constant $(2C_q \|F\|_p)$,
	\begin{align*}
		M_f(r) &\le |a_0| + 2C_q \|F\|_p \left( \sum_{n=1}^{\infty} r^n \right)\\&=|a_0| + 2C_q \|F\|_p \left(\frac{r}{1-r}\right).
	\end{align*}
	  We want to find the range of $r$ such that $M_f(r) \le \|F\|_p$. Let $|a_0| = a \|F\|_p$ where $a \in [0, 1]$. To find the `classic' Bohr radius, we typically consider functions where the constant term is zero ($a=0$). Setting $|a_0| = 0$ in our inequality \begin{align*}
	  	2C_q \|F\|_p \frac{r}{1-r} \le \|F\|_p.
	  \end{align*}Divide both sides by $\|F\|_p$ (assuming $F$ is not identically zero), we have 
	  \begin{align}\label{Eq-2.2}
	  	\frac{2C_q r}{1-r} \le 1.
	  \end{align}
	Thus, the Bohr radius is $r_p = \frac{1}{2C_q + 1}$.\vspace{2mm}
	
	To prove that $r_p$ is the best possible radius, we must show that for any $r > r_p$, we can find a function such that $M_f(r) > \|F\|_p$.It follows from \cite{Shi-Li-Lian-2026-CMFT}, the estimate $|a_n| + |b_n| \le 2C_q \|F\|_p$ is sharp. This means there exists a function (or a sequence of functions) where the sum of the $n$-th coefficients is exactly $2C_q \|F\|_p$.\vspace{2mm}
	
	Suppose we choose a radius $r$ slightly larger than $r_p$ \textit{i.e.,} 
	\begin{align*}
		r = \frac{1}{2C_q + 1} + \epsilon \quad (\text{where } \epsilon > 0)
	\end{align*}this implies that the inequality \eqref{Eq-2.2} is reversed.\vspace{1.2mm}
	
	 Now, consider a function $f_n$ from the class where the $n$-th coefficient is maximized and others are negligible. For very large $n$, the term $|a_n| + |b_n|$ can reach $2C_q \|F\|_p$. If we construct a function where the `tail' of the majorant series approaches the bound 
	 \begin{align*}
	 	M_f(r) \approx |a_0| + 2C_q \|F\|_p r^n.
	 \end{align*}As $n$ varies across all positive integers, the supremum of $M_f(r)$ for the whole class is given by our sum
	 \begin{align*}
	 	\sup_{f \in \mathcal{F}} M_f(r) = |a_0| + 2C_q \|F\|_p \frac{r}{1-r}.
	 \end{align*}If we set $|a_0|=0$ and $r > r_p$, then we have 
	 \begin{align*}
	 	M_f(r) = \|F\|_p \left( \frac{2C_q r}{1-r} \right) > \|F\|_p \cdot (1) = \|F\|_p.
	 \end{align*}
	 This completes the proof.
\end{proof}
\begin{rem}
	Specific example ($p = \infty$). For $p = \infty$, we have $C_q = 2/\pi$. The radius is 
	\begin{align*}
		r_\infty = \frac{1}{4/\pi + 1} = \frac{\pi}{\pi+4} \approx 0.4397.
	\end{align*} Using the extremal function 
	\begin{align*}
		f_n(z) = \frac{2M\alpha}{\pi} \arg\left(\frac{1+\beta z^n}{1-\beta z^n}\right),
	\end{align*} we see that the sum $|a_n| + |b_n|$ is exactly $4M/\pi$.\vspace{1.2mm}
	
	 If we take $r = 0.44$ (which is $> 0.4397$), the term $(|a_n| + |b_n|)r^n$ for a specific $n$ might be small, but the supremum over all $n$ in the majorant series definition leads to 
	 \begin{align*}
	 	M \left( \frac{4}{\pi} \frac{r}{1-r} \right) > M.
	 \end{align*}This proves that no value larger than $r_p$ can satisfy the Bohr inequality for every function in the class. Thus, $r_p$ is the best possible. 
\end{rem}
\section{\bf Landau-type theorems for $L^p$ boundary functions}\label{Sec-3}
A continuous complex-valued function $f=u+iv$ is a complex-valued harmonic function in a domain $\Omega\subset\mathbb{C}$, if both $u$ and $v$ are real-valued harmonic functions in $\Omega$ (see \cite{Duren-2004}). The inverse function theorem and a result of Lewy \cite{Lewy-BAMS-1936} shows that a harmonic function $f$ is locally univalent (\textit{i.e.,} one-to-one) in $\Omega$ if, and only if, the Jacobian $J_f$ is non-zero in $\Omega$, where 
\begin{align*}
	J_f(z)=|f_z(z)|^2-|f_{\bar{z}}(z)|^2.
\end{align*}
A harmonic function $f$ is said to be sense-preserving if $ J_f(z)>0$ for $z\in\Omega$. Let $\mathcal{H}$ denote the class of complex-valued harmonic functions $f$ in the unit disc $\mathbb{D}=\{z\in\mathbb{D} : |z|<1\}$, normalized by $f(0)=0=f_z(0)-1$. Each function $f$ in $\mathcal{H}$ can be expressed as $f=h+\overline{g}$, where $h$ and $g$ are analytic functions in $\mathbb{D}$ (see \cite{Duren-2004}). Here $h$ and $g$ are called the analytic and co-analytic part of $f$ respectively, and have the following power series representations 
\begin{align*}
	h(z)=z+\sum_{n=2}^{\infty}a_nz^n\;\;\mbox{and}\;\; g(z)=\sum_{n=1}^{\infty}b_nz^n.
\end{align*}
Let $\mathcal{S}_{\mathcal{H}}$ be the subclass of $\mathcal{H}$ consisting of univalent, and sense-preserving harmonic mappings on $\mathbb{D}$. Let $\mathcal{H}^0=\{f\in\mathcal{H}: f_{\bar{z}}(0)=0\}$, and $\mathcal{S}^0_{\mathcal{H}}=\{f\in\mathcal{S}_{\mathcal{H}}: f_{\bar{z}}(0)=0\}$. Hence for any function $f=h+\overline{g}$ in $\mathcal{H}^0$, its analytic and co-analytic parts can be represented by 
\begin{align}\label{Eq-1.1B}
	h(z)=z+\sum_{n=2}^{\infty}a_nz^n\;\;\mbox{and}\;\; g(z)=\sum_{n=2}^{\infty}b_nz^n,
\end{align}
respectively. It is interesting to note that $\mathcal{S}_{\mathcal{H}}$ reduced to $\mathcal{S}$, the class of normalized analytic and univalent functions in $\mathbb{D}$, if the co-analytic part of functions in the class $\mathcal{S}_{\mathcal{H}}$ is zero. In $1984$, Clunie and Shell-Small \cite{Clunie-Sheil-Small-1984} investigated the class $\mathcal{S}_{\mathcal{H}}$, together with some geometric subclasses. Subsequently, the class $\mathcal{S}_{\mathcal{H}}$ and its subclasses have been extensively studied by several authors (see \cite{Bshouty-Lyzzaik-COAT-2011,Bshouty-Joshi-Joshi-CVEE-2013,Kalaj-Vuorinen-PAMS-2012,Wang-Liang-JMAA-2001,Clunie-Sheil-Small-1984}.)\vspace{1.2mm}

For a continuously differentiable function $f$, we denote $\Lambda_f$ and $\lambda_f$ by
\begin{align*}
	\Lambda_f=\max_{0\leq\theta\leq 2\pi}|f_z+e^{-2i\theta}f_{\bar{z}}|=|f_z|+|f_{\bar{z}}|
\end{align*}
and 
\begin{align*}
	\lambda_f=\min_{0\leq\theta\leq 2\pi}|f_z+e^{-2i\theta}f_{\bar{z}}|=||f_z|-|f_{\bar{z}}||.
\end{align*}
Thus, it is easy to see that for sense-preserving harmonic mapping $f$, one has $J_f=\Lambda_f\lambda_f$. A harmonic mapping $f=h+\overline{g}$ defined on the unit disc $\mathbb{D}$ can be expressed as 
\begin{align*}
	f\left(re^{i\theta}\right)=\sum_{n=0}^{\infty}a_nr^ne^{in\theta}+\sum_{n=1}^{\infty}\overline{b_n}r^ne^{-in\theta},\; 0\leq r<1,
\end{align*}
where 
\begin{align*}
	h(z)=\sum_{n=0}^{\infty}a_nz^n\;\; \mbox{and}\;\; g(z)=\sum_{n=1}^{\infty}b_nz^n.
\end{align*}
For holomorphic functions $f$ on the unit disc $\mathbb{D}$ with $f'(0)=1$, there is a Bloch theorem (see \cite{Bloch-Les-1925}) which asserts the existence of a positive constant $b$ such that $f(\mathbb{D})$ contains a schlicht disc of radius $b$. A disc $D$ is said to be schlicht disc if there exists a region $\Omega$ in the unit disc $\mathbb{D}$ such that $f$ is univalent on $\Omega$ and $f(\Omega)=D$. Let $b(f)$ denote the least upper bound of the radii of all schlicht discs that $f$ carries and $\mathcal{F}$ denote the set of all holomorphic functions defined on $\overline{\mathbb{D}}:=\{z :|z|\leq 1\}$ satisfying $|f'(0)|=1$, then the Bloch constant is the number defined by 
\begin{align*}
	\beta(\mathcal{F})=\inf\{b(f) : f\in\mathcal{F}\}.
\end{align*}
If one considers the function $f(z)=z$, then clearly $\beta(\mathcal{F})\leq 1$. While the exact value of $\beta(\mathcal{F})$ is still unknown, better estimates have been established. In $1996$, Chen and Gauthier \cite{Chen-Gauthier-JAM-1996} proved that $\beta(\mathcal{F})$ lies within the interval $[0.4330, 0.4719]$, \textit{i.e.}, $0.4330\leq \beta(\mathcal{F})\leq 0.4719$.\vspace{1.2mm}

In $2000$, Chen \emph{et al.} \cite{Chen-Gauthier-Hengartner-PAMS-2000} established the following two versions of Landau-Bloch type theorems for bounded harmonic mapping on the disc under a suitable restriction.
Theorems A and B are not sharp. In $2006$, better estimates for Theorems A and B were given by Grigoryan \cite{Grigoyan-CV-2006}. Specifically, Grigoryan proved the following lemma.\vspace*{1.2mm}

Theorem 2.2 provides a radius of univalence $r_0$ and a schlicht disk radius $R_0$ based on the coefficient estimates from  \cite[Theorem 2.1]{Shi-Li-Lian-2026-CMFT}. While these radii are functional, the paper notes that for the special case of $p = \infty$ (bounded harmonic mappings), the result extends existing Landau theorems. However, the general $L^p$ bounds rely on the constant $C_q$, which may not yield the most restrictive (sharp) radii for all $p$.
\begin{lem}(\cite{Duren-2004})
	Assume that $f=h+\overline{g}$ with $h$ and $g$ analytic in the unit disc $\mathbb{D}$ and $h(z)=\sum_{n=1}^{\infty}a_nz^n$ and $g(z)=\sum_{n=1}^{\infty}b_nz^n$ for $z\in\mathbb{D}$.
	\begin{enumerate}
		\item[(a)] If $|f(z)|<M$ for $z\in\mathbb{D}$, then 
		\begin{align*}
			|a_n|, \; |b_n|\leq M,\; n=1, 2, \ldots
		\end{align*} 
		\item[(b)] If $\Lambda_f\leq \Lambda$ for $z\in\mathbb{D}$, then 
		\begin{align*}
			|a_n|+|b_n|\leq \frac{\Lambda}{n},\; n=1, 2, \ldots
		\end{align*}
	\end{enumerate}
\end{lem}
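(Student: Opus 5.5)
Both parts of the lemma come from the fact that every Taylor coefficient of $f=h+\overline{g}$ can be recovered from the Fourier coefficients of $\theta\mapsto f(re^{i\theta})$ on a circle $|z|=r<1$. We then bound these Fourier coefficients by the appropriate sup-norm and let $r\to1^-$.

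For part (a), fix $r\in(0,1)$ and use the expansion
\begin{align*}
f(re^{i\theta})=\sum_{n=1}^{\infty}a_nr^ne^{in\theta}+\sum_{n=1}^{\infty}\overline{b_n}r^ne^{-in\theta},
\end{align*}
which converges uniformly in $\theta$. Integrating term by term against $e^{-in\theta}$ and $e^{in\theta}$ (for $n\ge1$) gives
\begin{align*}
a_nr^n=\frac{1}{2\pi}\int_0^{2\pi}f(re^{i\theta})e^{-in\theta}\,d\theta,\qquad \overline{b_n}r^n=\frac{1}{2\pi}\int_0^{2\pi}f(re^{i\theta})e^{in\theta}\,d\theta.
\end{align*}
Orthogonality kills the other terms. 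The constant term does not interfere, since $h(0)=0$ here and in any case it only contributes to the zeroth Fourier coefficient. Since $|f|<M$, each integral has modulus at most $M$, so $|a_n|r^n\le M$ and $|b_n|r^n\le M$. Letting $r\to1^-$ yields $|a_n|,|b_n|\le M$.

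For part (b), apply the same idea to the derivative in a fixed direction. For $\varphi\in[0,2\pi]$, set
\begin{align*}
D_\varphi f(z)=f_z(z)e^{i\varphi}+f_{\bar z}(z)e^{-i\varphi}=e^{i\varphi}h'(z)+e^{-i\varphi}\overline{g'(z)},
\end{align*}
which has modulus at most $\Lambda_f\le\Lambda$. Because $f_{\bar z}=\overline{g'}$, the function $D_\varphi f$ is harmonic, with expansion
\begin{align*}
D_\varphi f(re^{i\theta})=e^{i\varphi}\sum_{n\ge1}na_nr^{n-1}e^{i(n-1)\theta}+e^{-i\varphi}\sum_{n\ge1}n\overline{b_n}r^{n-1}e^{-i(n-1)\theta}.
\end{align*}
Applying part (a)'s argument to the $(n-1)$-th Fourier coefficients would only bound $|a_n|$ and $|b_n|$ separately, whereas we need their sum. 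Such a bound on a single Fourier coefficient cannot reach $|a_n|+|b_n|$, because the two sums contribute to opposite frequencies $\pm(n-1)$ (except when $n=1$).

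This is the main obstacle, and the fix is to choose the weight on the whole circle rather than on one Fourier coefficient. Take the real trigonometric weight $\cos((n-1)\theta+\psi)$, which picks up both frequencies $\pm(n-1)$. A direct computation gives
\begin{align*}
\frac{1}{2\pi}\int_0^{2\pi}D_\varphi f(re^{i\theta})\,\cos\bigl((n-1)\theta+\psi\bigr)\,d\theta=\tfrac12\, n r^{n-1}\bigl(e^{i\varphi}a_ne^{i\psi}+e^{-i\varphi}\overline{b_n}e^{-i\psi}\bigr).
\end{align*}
Choosing $\varphi$ and $\psi$ so that both summands point in the same direction makes the bracket equal to $|a_n|+|b_n|$ in modulus. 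This needs $\arg a_n+\varphi+\psi=-\arg b_n-\varphi-\psi$, which is solvable. The left side is bounded by
\begin{align*}
\Lambda\cdot\frac{1}{2\pi}\int_0^{2\pi}|\cos(\cdot)|\,d\theta=\frac{2\Lambda}{\pi},
\end{align*}
so this gives $n(|a_n|+|b_n|)r^{n-1}\le 4\Lambda/\pi$, which is weaker than the claim.

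To obtain the sharp constant $\Lambda/n$, I would instead integrate along a radius. The one-variable function $t\mapsto f(te^{i\theta})$ has derivative bounded by $\Lambda$, so $f$ is $\Lambda$-Lipschitz. Hence $\bigl|f(re^{i\theta})-f(re^{i(\theta+\pi/n)})\bigr|$ is controlled by arc length. Alternatively, I would apply part (a) to $f$ itself with the bound $|f(z)|\le\Lambda|z|$, using the real-part trick of comparing $\mathrm{Re}(e^{i\alpha}f)$, which gives $|a_n|+|b_n|\le 2\Lambda$ for $n=1$, and then rescale. I expect that reconciling the sharp constant $\Lambda/n$ (as stated, following \cite{Duren-2004}) is the delicate step, and I would treat $n=1$ via $|h'(0)|+|g'(0)|=\Lambda_f(0)\le\Lambda$ directly.
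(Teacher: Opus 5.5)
Part (a) is fine. Part (b), however, is not proved for $n\ge 2$, and that is where the content of the lemma lies. Your cosine-weight computation only gives $n(|a_n|+|b_n|)\le 4\Lambda/\pi$, and neither fallback closes the gap. Feeding $|f(z)|\le\Lambda|z|<\Lambda$ into the real-part trick gives at best $|a_n|+|b_n|\le 4\Lambda/\pi$ (this is just Theorem A), with no decay in $n$, and rescaling $z\mapsto\rho z$ cannot create a factor $1/n$. The arc-length comparison of $f(re^{i\theta})$ with $f(re^{i(\theta+\pi/n)})$ does produce a $1/n$, but it loses a constant factor. Applied to a single Fourier coefficient, it again bounds $|a_n|$ and $|b_n|$ only separately. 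So the sharp constant is left unresolved, as you yourself suspect.

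The missing idea is not to fix the direction $\varphi$ on the whole circle. Since $|D_\varphi f|\le |h'|+|g'|=\Lambda_f$ for \emph{every} $\varphi$, you may let the direction rotate with the point, taking $\varphi(\theta)=\alpha-(n-1)\theta$. This shifts frequency $n-1$ of $h'$ and frequency $-(n-1)$ of $\overline{g'}$ to $0$ simultaneously. Your "opposite frequencies" obstacle then disappears, and an unweighted average suffices, so nothing like $4/\pi$ is lost:
\[
n r^{n-1}\bigl(e^{i\alpha}a_n+e^{-i\alpha}\overline{b_n}\bigr)=\frac{1}{2\pi}\int_0^{2\pi}\Bigl(e^{i\alpha}e^{-i(n-1)\theta}h'(re^{i\theta})+e^{-i\alpha}e^{i(n-1)\theta}\overline{g'(re^{i\theta})}\Bigr)\,d\theta .
\]
The integrand has modulus at most $\Lambda_f(re^{i\theta})\le\Lambda$. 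Choose $\alpha$ so that $e^{i\alpha}a_n$ and $e^{-i\alpha}\overline{b_n}$ have the same argument; this is solvable since it only requires $2\alpha\equiv-(\arg a_n+\arg b_n)$. This gives $n r^{n-1}(|a_n|+|b_n|)\le\Lambda$, and letting $r\to1^-$ finishes the proof for all $n\ge1$, including $n=1$. Equivalently, and this is the standard argument (the paper itself only cites the lemma), use Cauchy's estimate in $L^1$ form on the analytic functions $h'$ and $g'$ separately. That is, $n|a_n|r^{n-1}\le\frac{1}{2\pi}\int_0^{2\pi}|h'(re^{i\theta})|\,d\theta$, and likewise for $g'$. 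Then add, using $|h'|+|g'|=\Lambda_f\le\Lambda$ pointwise.
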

\begin{lem}(The Schwarz Lemma, \cite{Grigoyan-CV-2006}).
	Let $f$ be a harmonic mapping of the unit disk $\mathbb{D}$
	such that $f(0) = 0$ and $f(\mathbb{D}) \subset \mathbb{D}$. Then
	\begin{align*}
		\Lambda_f(0)\leq\frac{4}{\pi},
	\end{align*}
	\begin{align*}
		\Lambda_f(z)\leq\frac{8}{\pi(1-|z|^2)},\;\mbox{for}\;z\in\mathbb{D},
	\end{align*}
	\begin{align*}
		|f(z)|\leq\frac{4}{\pi}\arctan|z|\leq\frac{4}{\pi}|z|,\;\mbox{for}\;z\in\mathbb{D}.
	\end{align*}
\end{lem}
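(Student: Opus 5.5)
The plan is to derive all three inequalities from the Poisson representation of $f$, using the extremal structure already behind Theorem A. Since $f$ is a bounded harmonic map of $\mathbb{D}$ into $\mathbb{D}$, it has radial boundary values $F$ with $|F|\le 1$ a.e. on $\mathbb{T}$, and $f=\mathcal{P}[F]$.

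\emph{Step 1: the bound at the origin.} Here $\Lambda_f(0)=|a_1|+|b_1|$, so $\Lambda_f(0)\le 4/\pi$ is exactly Theorem A with $M=1$ and $n=1$. I will also record the direct argument, because it shows that the hypothesis $f(0)=0$ is \emph{not} used at this step. One has
\begin{align*}
a_1=\frac{1}{2\pi}\int_0^{2\pi}F(e^{it})e^{-it}\,dt,\qquad \overline{b_1}=\frac{1}{2\pi}\int_0^{2\pi}F(e^{it})e^{it}\,dt .
\end{align*}
Choose unimodular $\alpha,\beta$ so that $|a_1|+|b_1|$ is the real part of $\frac{1}{2\pi}\int F(e^{it})(\alpha e^{-it}+\beta e^{it})\,dt$. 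Then
\begin{align*}
|a_1|+|b_1|\le \frac{1}{2\pi}\int_0^{2\pi}|\alpha e^{-it}+\beta e^{it}|\,dt=\frac{1}{2\pi}\int_0^{2\pi}2|\cos(t+c)|\,dt=\frac4\pi .
\end{align*}

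\emph{Step 2: the pointwise bound on $\Lambda_f$.} Fix $z\in\mathbb{D}$ and put $\varphi_z(w)=(z+w)/(1+\bar z w)$. The map $f\circ\varphi_z$ is harmonic from $\mathbb{D}$ into $\mathbb{D}$, although it need not vanish at $0$. By Step 1, which did not need that normalization, $\Lambda_{f\circ\varphi_z}(0)\le 4/\pi$. Because $\varphi_z$ is analytic, the chain rule gives $(f\circ\varphi_z)_w=f_z\varphi_z'$ and $(f\circ\varphi_z)_{\bar w}=f_{\bar z}\overline{\varphi_z'}$. Hence
\begin{align*}
\Lambda_{f\circ\varphi_z}(0)=\Lambda_f(z)\,|\varphi_z'(0)|=\Lambda_f(z)(1-|z|^2).
\end{align*}
This yields $\Lambda_f(z)\le 4/(\pi(1-|z|^2))$, which is stronger than the stated bound $8/(\pi(1-|z|^2))$.

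\emph{Step 3: the growth bound.} Fix $z\neq 0$ with $f(z)\neq 0$, and let $\lambda=f(z)/|f(z)|$. Then $u=\mathrm{Re}(\bar\lambda f)$ is real harmonic, satisfies $|u|<1$ and $u(0)=0$, and $u(z)=|f(z)|$. Write $u=\mathrm{Re}\,\Phi$ with $\Phi$ analytic and $\Phi(0)=0$. Then $\Phi$ maps $\mathbb{D}$ into the strip $\{|\mathrm{Re}\,w|<1\}$, and so $\Phi$ is subordinate to the conformal map of $\mathbb{D}$ onto this strip fixing $0$, namely
\begin{align*}
k(\zeta)=\frac{2}{\pi i}\log\frac{1+i\zeta}{1-i\zeta}=\frac4\pi\arctan\zeta .
\end{align*}
By the Schwarz lemma, $\Phi(z)=k(\omega(z))$ with $|\omega(z)|\le|z|$. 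It then remains to show that
\begin{align*}
\max_{|\zeta|=r}\mathrm{Re}\,k(\zeta)=\frac4\pi\arctan r .
\end{align*}
This holds because $\mathrm{Re}\,k$ on $|\zeta|=r$ is the harmonic measure computation $\frac{2}{\pi}\arg\bigl((1+i\zeta)/(1-i\zeta)\bigr)$, which is maximized at $\zeta=r$. Combining this maximum with monotonicity in $r$ gives $|f(z)|\le\frac4\pi\arctan|z|$. Finally, $\arctan x\le x$ for $x\ge0$ gives the last inequality.

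I expect the main obstacle to be the explicit extremal computation in Step 3. The subordination itself is routine; what needs care is correctly identifying the strip map and verifying that the maximum of its real part on the circle $|\zeta|=r$ equals $\frac4\pi\arctan r$.
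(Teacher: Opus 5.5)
The paper gives no proof of this lemma; it quotes it from Grigoryan \cite{Grigoyan-CV-2006}. Your argument therefore has to stand on its own, and it does: it is correct and self-contained.

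Step 1 is the $n=1$ case of the duality argument behind Theorem A. Your observation that it never uses $f(0)=0$ is the key to Step 2. Conformal invariance then gives $\Lambda_f(z)\le \frac{4}{\pi(1-|z|^2)}$, which is the sharp bound (Colonna) and a factor $2$ better than the stated one. The constant $8/\pi$ is what you would get by insisting on the normalization: apply the first inequality to $\frac12\bigl(f\circ\varphi_z-f(z)\bigr)$, a harmonic self-map of $\mathbb{D}$ vanishing at $0$. Your route avoids that loss.

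Step 3 is the classical subordination argument for the Heinz-type growth bound, and it is correct. The one point you asserted rather than proved is the maximum of $\mathrm{Re}\,k$ on $|\zeta|=r$. It follows in one line from
\begin{align*}
\frac{1+i\zeta}{1-i\zeta}=\frac{(1-|\zeta|^2)+2i\,\mathrm{Re}\,\zeta}{|1-i\zeta|^2},\qquad\text{so}\qquad \mathrm{Re}\,k(\zeta)=\frac{2}{\pi}\arctan\frac{2\,\mathrm{Re}\,\zeta}{1-|\zeta|^2}.
\end{align*}
On $|\zeta|=r$ this is largest at $\zeta=r$, where it equals $\frac{2}{\pi}\arctan\frac{2r}{1-r^2}=\frac{4}{\pi}\arctan r$. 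Combining this with $|\omega(z)|\le|z|$ and the monotonicity of $\arctan$ gives exactly what you need.
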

The following result has been proved by Grigoryan using Lemma A, which improved the estimates in Theorems A and B.
\begin{thmC}\cite{Grigoyan-CV-2006}
	Let $f$ be harmonic mapping of the unit disc $\mathbb{D}$ such that $f(0)=0$, $J_f(0)=1$ and $|f(z)|<M$ for $z\in\mathbb{D}$. Then, $f$ is univalent on a disc $\mathbb{D}_{\rho_1}$ with 
	\begin{align*}
		\rho_1=1-\frac{2\sqrt{2}M}{\sqrt{\pi+8M^2}}
	\end{align*}
	and $f(\mathbb{D}_{\rho_1})$ contains a schlicht disc $\mathbb{D}_{R_1}$ with 
	\begin{align*}
		R_1=\frac{\pi}{4M}+4M-4M\sqrt{1+\frac{\pi}{8M^2}}.
	\end{align*}
\end{thmC}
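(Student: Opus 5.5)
The plan is to prove Theorem~C directly from the Schwarz lemma for harmonic mappings stated above (Lemma A) and the coefficient bounds in part (a) of the lemma from \cite{Duren-2004}. Write $f=h+\overline{g}$ with $h(z)=\sum_{n\ge1}a_nz^n$ and $g(z)=\sum_{n\ge1}b_nz^n$.

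\emph{Lower bound for $\lambda_f(0)$.} Apply the Schwarz lemma to $f/M$, which maps $\mathbb{D}$ into $\mathbb{D}$ and fixes the origin. This gives $\Lambda_f(0)\le 4M/\pi$. Since $1=J_f(0)=\Lambda_f(0)\lambda_f(0)$, we get
\begin{align*}
\lambda_f(0)\ge \frac{\pi}{4M}.
\end{align*}
In particular $f$ is sense-preserving at $0$, up to replacing $f$ by $\overline{f}$.

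\emph{Univalence on $\mathbb{D}_r$.} Fix $z_1\neq z_2$ in $\mathbb{D}_r$ and let $\gamma=[z_1,z_2]$. Split the increment as
\begin{align*}
|f(z_2)-f(z_1)| &\ge \left|\int_\gamma f_z(0)\,dz+f_{\bar z}(0)\,d\bar z\right| -\left|\int_\gamma (f_z(z)-f_z(0))\,dz+(f_{\bar z}(z)-f_{\bar z}(0))\,d\bar z\right|\\
&\ge \lambda_f(0)|z_2-z_1| - |z_2-z_1|\sup_{z\in\gamma}\sum_{n\ge2}n(|a_n|+|b_n|)|z|^{n-1}.
\end{align*}
By the Duren lemma (a), $|a_n|,|b_n|\le M$, so $|a_n|+|b_n|\le 2M$. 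Hence the supremum is at most
\begin{align*}
2M\left(\frac{1}{(1-r)^2}-1\right).
\end{align*}
Univalence therefore holds as soon as
\begin{align*}
\frac{\pi}{4M}\ge 2M\left(\frac{1}{(1-r)^2}-1\right),
\qquad\text{i.e.}\qquad (1-r)^2\ge \frac{8M^2}{\pi+8M^2}.
\end{align*}
This gives exactly $\rho_1=1-2\sqrt2 M/\sqrt{\pi+8M^2}$. The $2M$ bound is what is needed; using the sharper $4M/\pi$ bound would produce a different, non-matching formula.

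\emph{Schlicht disc.} For $|z|=\rho_1$ we estimate
\begin{align*}
|f(z)|\ge \lambda_f(0)\rho_1-\sum_{n\ge2}(|a_n|+|b_n|)\rho_1^n \ge \frac{\pi}{4M}\rho_1-\frac{2M\rho_1^2}{1-\rho_1}.
\end{align*}
Since $f$ is univalent on $\mathbb{D}_{\rho_1}$ and $f(0)=0$, the image $f(\mathbb{D}_{\rho_1})$ contains the disc about $0$ whose radius is this lower bound. It remains to substitute the value of $\rho_1$, using the identity $(1-\rho_1)^{-2}=1+\pi/(8M^2)$, and simplify to $R_1=\frac{\pi}{4M}+4M-4M\sqrt{1+\pi/(8M^2)}$.

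\emph{Main obstacle.} The delicate step is this final algebraic simplification, together with checking that the resulting expression is positive for every admissible $M$. If the direct substitution does not collapse to $R_1$, I would use the univalence relation to eliminate $\pi/(4M)$ and rewrite the bound as a function of $1-\rho_1$ alone before simplifying.
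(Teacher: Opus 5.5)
Your argument is correct and follows the standard route that the paper only alludes to: the paper gives no proof of Theorem C, merely attributing it to Grigoryan via the harmonic Schwarz lemma (Lemma A), and you combine $\lambda_f(0)=1/\Lambda_f(0)\ge \pi/(4M)$ (note $J_f(0)=1>0$ already makes $f$ sense-preserving at $0$, so no conjugation is needed) with $|a_n|+|b_n|\le 2M$ and the segment estimate, with strictness on the open disc coming from $\max(|z_1|,|z_2|)<\rho_1$. The step you flag as the main obstacle is routine: with $t=\sqrt{1+\pi/(8M^2)}$ one has $1-\rho_1=1/t$ and $\pi/(4M)=2M(t^2-1)$, so both your lower bound $\frac{\pi}{4M}\rho_1-\frac{2M\rho_1^2}{1-\rho_1}$ and $R_1$ equal $2M(t-1)^2=2M\rho_1^2/(1-\rho_1)^2>0$.
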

\begin{thmD}\cite{Grigoyan-CV-2006}
	Let $f$ be harmonic mapping of the unit disc $\mathbb{D}$ such that $f(0)=0$, $\lambda_f(0)=1$ and $\Lambda_f(z)<\Lambda$ for all $z\in\mathbb{D}$. Then, $f$ is univalent on a disc $\mathbb{D}_{\rho_2}$ with 
	\begin{align*}
		\rho_2=\rho_2(\Lambda)=\frac{1}{1+\Lambda}
	\end{align*}
	and $f(\mathbb{D}_{\rho_2})$ contains a schlicht disc $\mathbb{D}_{R_2}$ with 
	\begin{align*}
		R_2(\Lambda)=1-\Lambda\ln\left(1+\frac{1}{\Lambda}\right).
	\end{align*}
\end{thmD}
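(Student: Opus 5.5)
The plan is to run the standard coefficient-majorant argument for Landau-type theorems. The only input needed from earlier in the paper is part (b) of the Lemma from \cite{Duren-2004}. Write $f=h+\overline{g}$ with $h(z)=\sum_{n\ge1}a_nz^n$ and $g(z)=\sum_{n\ge1}b_nz^n$; this is legitimate because $f(0)=0$. Since $\Lambda_f<\Lambda$ in $\mathbb{D}$, that lemma gives
\[
|a_n|+|b_n|\le \frac{\Lambda}{n},\qquad n\ge 1.
\]
Hence $n(|a_n|+|b_n|)\le \Lambda$ for every $n$. The normalization $\lambda_f(0)=1$ means $\bigl||a_1|-|b_1|\bigr|=1$.

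\emph{Univalence.} Fix $0<r<\rho_2=1/(1+\Lambda)$ and distinct points $z_1,z_2\in\mathbb{D}_r$. Let $\gamma=[z_1,z_2]$ be the segment joining them. We write
\[
f(z_2)-f(z_1)=\int_\gamma \bigl(f_z(0)\,dz+f_{\bar z}(0)\,d\bar z\bigr)+\int_\gamma\bigl((f_z(z)-f_z(0))\,dz+(f_{\bar z}(z)-f_{\bar z}(0))\,d\bar z\bigr).
\]
The first integral equals $a_1(z_2-z_1)+\overline{b_1}\,\overline{(z_2-z_1)}$. Its modulus is at least $\lambda_f(0)|z_2-z_1|=|z_2-z_1|$. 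In the second integral, for $|z|<r$ we have
\[
|f_z(z)-f_z(0)|+|f_{\bar z}(z)-f_{\bar z}(0)|\le\sum_{n\ge2}n(|a_n|+|b_n|)r^{n-1}\le \Lambda\frac{r}{1-r}.
\]
Combining the two estimates gives
\[
|f(z_2)-f(z_1)|\ge |z_2-z_1|\Bigl(1-\frac{\Lambda r}{1-r}\Bigr)>0,
\]
because $r<1/(1+\Lambda)$. Letting $r\uparrow\rho_2$ shows that $f$ is univalent on $\mathbb{D}_{\rho_2}$.

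\emph{Schlicht disc.} Take $|z|=\rho<\rho_2$. Using the same coefficient bounds,
\[
|f(z)|\ge |a_1z+\overline{b_1}\bar z|-\sum_{n\ge2}(|a_n|+|b_n|)\rho^n\ge \rho-\Lambda\sum_{n\ge2}\frac{\rho^n}{n}=\rho-\Lambda\bigl(-\ln(1-\rho)-\rho\bigr).
\]
As $\rho\to\rho_2$, we have $1-\rho_2=\Lambda/(1+\Lambda)$ and $(1+\Lambda)\rho_2=1$. So the right-hand side tends to
\[
(1+\Lambda)\rho_2-\Lambda\ln\Bigl(1+\frac{1}{\Lambda}\Bigr)=1-\Lambda\ln\Bigl(1+\frac1\Lambda\Bigr)=R_2.
\]

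The step I expect to be the delicate point is turning this boundary estimate into the covering statement. On each circle $|z|=\rho<\rho_2$, $f$ is univalent and continuous, and $f(0)=0$. Therefore $f(\mathbb{D}_\rho)$ is a Jordan domain containing $0$. Its boundary $f(\{|z|=\rho\})$ lies outside the disc $\{|w|<m(\rho)\}$, where $m(\rho)$ is the lower bound displayed above. Hence $f(\mathbb{D}_\rho)\supset\mathbb{D}_{m(\rho)}$. Taking the union over $\rho<\rho_2$ and using $m(\rho)\to R_2$ gives $f(\mathbb{D}_{\rho_2})\supset\mathbb{D}_{R_2}$. This avoids having to evaluate $f$ on the circle $|z|=\rho_2$ itself.
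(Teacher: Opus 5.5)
Your argument is correct. The paper quotes Theorem D from Grigoryan without giving a proof, but your route is the standard coefficient-majorant argument: Lemma (b) gives $n(|a_n|+|b_n|)\le\Lambda$, the segment integral gives univalence, and the bound $|z|-\sum_{n\ge2}(|a_n|+|b_n|)|z|^n$ gives the covering disc. This is exactly the scheme the paper runs in its proof of Theorem~\ref{Th-3.1}, with $\Lambda/n$ in place of $2C_q\|F\|_p$, and your limiting step $\rho\uparrow\rho_2$ via the Jordan-domain argument is more careful than the paper's version, which evaluates $|f|$ directly on $|z|=r_0$, where univalence has not been established.
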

\begin{thm}\label{Th-3.1}
	Suppose that $F \in L^p(\mathbb{T})$ for $1 \le p \le \infty$ and $f = \mathcal{P}[F]$ is a harmonic mapping in the unit disk $\mathbb{D}$. If $f$ is normalized such that $f(0) = f_z(0) - 1 = f_{\overline{z}}(0) = 0$, then $f$ is univalent in the disk $D_{r_0}$ and $f(D_{r_0})$ contains a schlicht disk $D_{R_0}$, where:
	$$r_0 = 1 - \frac{\sqrt{4C_q^2 \|F\|_p^2 + 2C_q \|F\|_p}}{2C_q \|F\|_p + 1}$$ and $$R_0 = r_0 - \frac{2C_q \|F\|_p r_0^2}{1 - r_0},$$where $C_q = \left(\frac{1}{2\pi} \int_0^{2\pi} |\cos(nt)|^q dt\right)^{1/q}$ and $1/p + 1/q = 1$.
\end{thm}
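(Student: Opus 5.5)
Write $K:=2C_q\|F\|_p$. The plan is the standard coefficient-estimate Landau argument. By Theorem B(b), $|a_n|+|b_n|\le K$ for every $n\ge 2$. The normalization gives $a_1=1$ and $b_1=0$, so in fact $K\ge 1$.

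\textbf{Univalence.} Fix distinct $z_1,z_2\in\mathbb{D}_r$ with $r<1$ to be chosen. Write
\begin{align*}
f(z_2)-f(z_1)=\int_{[z_1,z_2]} f_z(z)\,dz+f_{\bar z}(z)\,d\bar z .
\end{align*}
Split off the linear part:
\begin{align*}
|f(z_2)-f(z_1)|\ge |z_2-z_1|\Big(1-\sum_{n\ge2}n(|a_n|+|b_n|)r^{n-1}\Big).
\end{align*}
This holds because $f_z(0)=1$, $f_{\bar z}(0)=0$, and on the segment
\begin{align*}
|f_z(z)-1|+|f_{\bar z}(z)|\le\sum_{n\ge2}n(|a_n|+|b_n|)r^{n-1}.
\end{align*}
Inserting the coefficient bound gives
\begin{align*}
\sum_{n\ge 2}nKr^{n-1}=K\Big(\frac{1}{(1-r)^2}-1\Big).
\end{align*}
So $f$ is injective on $\mathbb{D}_r$ whenever $1-K\big((1-r)^{-2}-1\big)>0$, that is, $(1-r)^2>K/(K+1)$. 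Solving gives the threshold
\begin{align*}
r=1-\sqrt{\frac{K}{K+1}}=1-\frac{\sqrt{K^2+K}}{K+1},
\end{align*}
which matches the formula for $r_0$ in Theorem \ref{Th-3.1}. At $r=r_0$ itself the lower bound degenerates to $0$. To get strict injectivity, I will run the argument on $\mathbb{D}_r$ for every $r<r_0$ and take the union, which gives injectivity on the open disk $D_{r_0}$.

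\textbf{Schlicht disk.} For $|z|=r_0$, I estimate
\begin{align*}
|f(z)|\ge |z|-\sum_{n\ge2}(|a_n|+|b_n|)r_0^n\ge r_0-K\frac{r_0^2}{1-r_0}=R_0 .
\end{align*}
Since $f$ is univalent on $D_{r_0}$ with $f(0)=0$, and the boundary image stays at distance at least $R_0$ from $0$ (using $r\uparrow r_0$ and continuity), a degree/argument-principle argument for the injective map shows that $f(D_{r_0})\supset D_{R_0}$.

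\textbf{Positivity of $R_0$ (main obstacle).} The step needing care is to check that $R_0>0$, i.e. $Kr_0<1-r_0$. Set $s=1-r_0=\sqrt{K/(K+1)}$. The condition becomes $K(1-s)<s$, i.e. $s>K/(K+1)$. This holds because $s=\sqrt{K/(K+1)}$ and $0<K/(K+1)<1$, so the square root exceeds the number itself. So the statement is nonvacuous.

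I would also remark that sharpness is not claimed here, since the bound $|a_n|+|b_n|\le 2C_q\|F\|_p$ cannot be attained simultaneously for all $n$.
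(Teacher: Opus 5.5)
Your proof is correct and follows the paper's argument: you use the coefficient bound $|a_n|+|b_n|\le 2C_q\|F\|_p$ from Theorem B, derive the same lower estimate $|f(z_1)-f(z_2)|\ge|z_1-z_2|\bigl(1-\sum_{n\ge2}n(|a_n|+|b_n|)r^{n-1}\bigr)$ giving the threshold $r_0$, and bound $|f(z)|\ge r_0-2C_q\|F\|_p r_0^2/(1-r_0)$ on $|z|=r_0$ to get $R_0$. Your additional steps (exhausting $D_{r_0}$ by smaller disks, the topological argument that $f(D_{r_0})\supset D_{R_0}$, and checking $K\ge 1$ and $R_0>0$) fill in details the paper leaves implicit, and you are right not to claim sharpness, since by the Riemann--Lebesgue lemma the coefficient bound cannot hold with equality for all $n$ at once.
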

\begin{proof}[\bf Proof of Theorem \ref{Th-3.1}]
	Since $f(0) = f_z(0) - 1 = f_{\overline{z}}(0) = 0$, the power series expansion of $f$ is given by 
	\begin{align*}
		f(z) = z + \sum_{n=2}^{\infty} a_n z^n + \sum_{n=2}^{\infty} \overline{b_n z^n}.
	\end{align*}
	From the coefficient estimates established in Theorem B, we have for any integer $n \ge 2$, 
	\begin{align*}
		|a_n| + |b_n| \le 2C_q \|F\|_p.
	\end{align*}
	To prove $f$ is univalent in $|z|< r$, we show that for any two distinct points $z_1, z_2 \in D_r$, $f(z_1) \neq f(z_2)$. It is easy to see that 
	\begin{align*}
		|f(z_1) - f(z_2)| = \left| (z_1 - z_2) + \sum_{n=2}^{\infty} a_n(z_1^n - z_2^n) + \sum_{n=2}^{\infty} \overline{b_n(z_1^n - z_2^n)} \right|.
	\end{align*}
	
	Applying the triangle inequality, we have 
	\begin{align*}
		|f(z_1) - f(z_2)| \ge |z_1 - z_2| \left( 1 - \sum_{n=2}^{\infty} n(|a_n| + |b_n|)r^{n-1} \right).
	\end{align*}
	Substituting the bound $|a_n| + |b_n| \le 2C_q \|F\|_p$, we have 
	\begin{align*}
		|f(z_1) - f(z_2)| \ge |z_1 - z_2|\left( 1 - 2C_q \|F\|_p \sum_{n=2}^{\infty} n r^{n-1} \right).
	\end{align*}
	Using the identity 
	\begin{align*}
		\sum_{n=2}^{\infty} n r^{n-1} = \frac{1}{(1-r)^2} - 1 = \frac{2r - r^2}{(1-r)^2}
	\end{align*} the condition for univalence is 
	\begin{align*}
		1 - 2C_q \|F\|_p \frac{2r - r^2}{(1-r)^2} > 0.
	\end{align*} This leads to the quadratic equation 
	\begin{align*}
		(2C_q \|F\|_p + 1)r^2 - 2(2C_q \|F\|_p + 1)r + 1 = 0,
	\end{align*} whose minimum positive root is $r_0$.\vspace{2mm}

	We now establishing the Schlicht Disk ($R_0$). 	For $|z| = r_0$, we estimate the minimum distance from the origin 
	\begin{align*}
		|f(z)-f(0)| = \left| z + \sum_{n=2}^{\infty} (a_n z^n + \overline{b_n z^n}) \right| \ge |z| - \sum_{n=2}^{\infty} (|a_n| + |b_n|)|z|^n
	\end{align*}
	which shows that
	\begin{align*}
		|f(z)| \ge r_0 - 2C_q \|F\|_p \sum_{n=2}^{\infty} r_0^n = r_0 - 2C_q \|F\|_p \frac{r_0^2}{1 - r_0}.
	\end{align*}
	Defining $R_0 = r_0 - \frac{2C_q \|F\|_p r_0^2}{1 - r_0}$ ensures that $f(\mathbb{D}_{r_0})$ contains the disk $\mathbb{D}_{R_0}$.\vspace{1.2mm}
	
	To show that the Landau constants $r_0$ and $R_0$ derived for harmonic mappings $f = \mathcal{P}[F]$. The derivation of $r_0$ uses the inequality 
	\begin{align*}
		\sum_{n=2}^{\infty} n(|a_n| + |b_n|)r^{n-1} \leq 2C_q \|F\|_p \sum_{n=2}^{\infty} nr^{n-1}=2C_q \|F\|_p\frac{2r-r^2}{(1-r)^2}.
	\end{align*}
	 If we choose a sequence of functions where the coefficients $|a_n| + |b_n|$ approach $2C_q \|F\|_p$ for all $n$, the derivative $|f_z(z)| - |f_{\bar{z}}(z)|$ will approach zero exactly at $r_0$. At any $r > r_0$, the term $2C_q \|F\|_p \frac{2r-r^2}{(1-r)^2}$ becomes greater than $1$, meaning there exists a function in the class where the Jacobian becomes zero or negative, violating univalence. \vspace{1.2mm}
	 
	  The radius $R_0$ is calculated as 
	  \begin{align*}
	  	R_0 = r_0 - \sum_{n=2}^{\infty} (|a_n| + |b_n|)r_0^n.
	  \end{align*} Using the sharp bound $2C_q \|F\|_p$, we get 
	  \begin{align*}
	  	|f(z)| \geq r_0 - 2C_q \|F\|_p \frac{r_0^2}{1-r_0} = R_0.
	  \end{align*}
	 Because the coefficients $|a_n| + |b_n|$ can simultaneously approach the bound $2C_q \|F\|_p$ (in the sense of a limit of functions within the $L^p$ class), the distance of the image boundary from the origin can be as small as $R_0$. Therefore, $\mathbb{D}_{R_0}$ is the largest disk that is guaranteed to be contained in $f(\mathbb{D}_{r_0})$ for every function in the class.
\end{proof}
\section{\bf Concluding remarks and future problems}
In this study, we have successfully extended the theory of the Bohr phenomenon and Landau-type theorems to the class of harmonic mappings induced by boundary functions in Lebesgue spaces $L^p(\mathbb{T})$. By utilizing sharp coefficient estimates, we determined the exact Bohr radius $r_p$ as a function of the conjugate exponent $q$, providing a positive answer to the influence of $L^p$ norms on the majorant series. Furthermore, the application of Gauss hypergeometric functions allowed for the derivation of improved Landau constants, specifically the univalence radius $r_0$ and the schlicht disk radius $R_0$. \vspace{1.2mm}

Despite these advancements, several promising avenues for further research remain open:\vspace{1.2mm}

\noindent{\bf [A].} \textbf{Sharpness of Landau constants:} While the Bohr radii established in Theorem 2.1 and 2.2 are sharp, the Landau constants $r_0$ and $R_0$ in Theorem 3.1 are derived through inequalities that may lose precision for higher-order coefficients. Determining the absolute sharp values for the range $1 < p < \infty$ remains a challenging open problem.\vspace{1.2mm}

\noindent{\bf [B].} \textbf{Extremal function construction:} Identifying the specific boundary functions $F \in L^p(\mathbb{T})$ that attain the minimum univalence radius for a given $p$-norm is a critical objective. Future work may focus on specialized harmonic Koebe-type mappings to reach these limits.\vspace{1.2mm}

\noindent{\bf [C].} \textbf{Alternative normalizations}: This investigation utilized the standard normalization $f(0) = f_z(0)-1 = f_{\bar{z}}(0) = 0$. A valuable extension would be to find sharp constants under Jacobian normalization ($J_f(0)=1$) or dilatation normalization ($\lambda_f(0)=1$).\vspace{1.2mm}

\noindent{\bf [D].} \textbf{Geometric constraints:} Beyond $L^p$ norm constraints, it is important to explore how these constants change when the boundary function $F$ is subject to additional geometric properties, such as convexity or starlikeness of the range $f(\mathbb{D})$.
\section{\bf Declarations}
\noindent\textbf{Compliance of Ethical Standards}\\

\noindent\textbf{Conflict of interest.} The authors declare that there is no conflict  of interest regarding the publication of this paper.\vspace{1.5mm}

\noindent\textbf{Data availability statement.}  Data sharing not applicable to this article as no datasets were generated or analyzed during the current study.\vspace{1.5mm}

\noindent\textbf{Funding.} No fund.

\end{document}